\theoremstyle{plain}
\newtheorem{theorem}{Theorem}
\newtheorem{lemma}[theorem]{Lemma}
\theoremstyle{remark}
\newtheorem{remark}{Remark}[section]
\newtheorem{definition}{Definition}[section]
\renewcommand{\leq}{\leqslant}
\renewcommand{\geq}{\geqslant}
\newcommand{\inner}[2]{\langle #1,#2\rangle}
\newcommand{\R}{{\mathbb R}}
\renewcommand{\div}{\mathop{\mathrm{div}}}
\DeclareMathOperator{\Ric}{Ric}%
\DeclareMathOperator{\Rm}{Rm}%
\DeclareMathOperator{\Vol}{Vol}%
\begin{document}

\title{On Locally Conformally
Flat Gradient Shrinking Ricci Solitons}

\author{Xiaodong Cao$^*$}
\thanks{$^*$Research
partially supported by the Jeffrey Sean Lehman Fund from Cornell
University}
\address{Department of Mathematics,
  Cornell University, Ithaca, NY 14853}
\email{cao@math.cornell.edu, wang@math.cornell.edu}

\author{Biao Wang}
\thanks{}

\author{Zhou Zhang
}

\address{Department of Mathematics, University of Michigan,
at Ann Arbor, MI 48109}
\email{zhangou@umich.edu}
\renewcommand{\subjclassname}{%
  \textup{2000} Mathematics Subject Classification}
\subjclass[2000]{Primary 53C25, 53C44}

\date{Nov. 8th, 2008}

\begin{abstract}  In this paper, we first apply an integral identity
on Ricci solitons to prove that closed locally conformally flat
gradient Ricci solitons are of constant sectional curvature. We
then generalize this integral identity to complete noncompact
gradient shrinking Ricci solitons, under the conditions that the Ricci
curvature is bounded from below and the Riemannian curvature
tensor has at most exponential growth. As a consequence of this
identity, we classify complete locally conformally flat
gradient shrinking Ricci solitons with Ricci curvature bounded from below.
\end{abstract}

\maketitle

\markboth{Xiaodong Cao, Biao Wang and Zhou Zhang} {On Locally
Conformally Flat Gradient Shrinking Ricci Solitons}

\section{Introduction and Main Theorem}

The study of Ricci soliton has been an important part in the study of the
Ricci flow. It usually serves as a (dilation) limit of solutions to the
Ricci flow. Though being a Ricci soliton is a purely static condition,
it is usually convenient to view it as a
solution to the Ricci flow.

\begin{definition} A complete n-dimensional Riemannian manifold $(M, g)$ is
called a Ricci soliton if
\begin{equation}
     2{}R_{ij}+\nabla_{i}V_{j}+\nabla_{j}V_{i}=2\rho{}g_{ij}\ ,
   \end{equation}
for some vector field $V$
and constant $\rho$, where $R_{ij}$ is the Ricci curvature tensor.
Moreover, if $V$ is a gradient vector field of a function $f$ on
$M$, then we have a gradient Ricci soliton,  satisfying the
equation
   \begin{equation}\label{eq:grs}
      R_{ij}+\nabla_{i}\nabla_{j}f=\rho{}g_{ij}\ .
   \end{equation}
We say that $(M,g)$ is expanding, steady or shrinking
if $\rho$ is $<0$, $=0$ or $>0$, respectively. Notice that when
$f$ is a constant function, we have the Einstein equation. \end{definition}

The classification of gradient Ricci solitons has been a very
interesting problem. For closed expanding and steady gradient
Ricci solitons, it is well-known that they must be Einstein (see
\cite{per02} or \cite{CCGGIIKLLN}, Proposition 1.13). The
shrinking case is a little bit more complicated. When $n=2, 3$, it
is known that closed shrinking Ricci solitons are Einstein (see
\cite{H} and \cite{I}). If $n>3$, there exist shrinking Ricci
solitons which are not Einstein (See  N.
Koiso \cite{K}, H.-D. Cao \cite{CHD1996}, M. Feldman, T. Ilmanen and D. Knopf \cite{FIK}).
The recent work of C. B\"ohm and B. Wilking (\cite{BW}) implies that
gradient Ricci solitons with positive curvature operator must be
of constant curvature.

The classification of complete noncompact gradient Ricci solitons has
been studied by many authors very recently under various conditions,
for example, see
\cite{naber07}, \cite{NW2007}, \cite{pw07} and \cite{fmz08}.

In \cite{CXD2007}, the first author proves several identities on
closed gradient Ricci solitons, we now extend one of them to the case of
complete noncompact gradient Ricci solitons. Since the proof of
this identity requires integration by parts, we need to have
some control on  curvatures and the potential function $f$ such
that we can justify  integration by parts. Our main theorem is the
following:

\begin{theorem}\label{lem:C2007-(1.4)-B}
Let $(M,g)$ be a non-flat complete noncompact shrinking gradient
Ricci soliton given by \eqref{eq:grs}, suppose that the Ricci curvature
of $(M,g)$ is bounded from below. Assume further that it satisfies
\begin{equation}\label{eq:NW2007-(1.1)}
   |R_{ijkl}|(x)\leq\exp(a(r(x) + 1))
\end{equation} for some constant $a>0$, where $r(x)$ is the distance
function to a fixed
point on the manifold.
 Then the identity \begin{equation}\label{eq:cao2007(jga)-(1.11)}
      \int_{M}|\nabla\Ric|^{2}e^{-f} d\mu=
      \int_{M}|\div\Rm|^{2}e^{-f} d\mu
   \end{equation} holds.
\end{theorem}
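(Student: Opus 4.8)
The plan is to reduce the integral identity to the pointwise structure equations of the soliton, and then to replace the global integration by parts---which is what produces \eqref{eq:cao2007(jga)-(1.11)} in the closed case treated in \cite{CXD2007}---by a cutoff argument whose error terms are absorbed by the Gaussian decay of the weight $e^{-f}$. First I would record the pointwise identities obtained by differentiating \eqref{eq:grs}: writing $f_l=\nabla_l f$ and commuting covariant derivatives, differentiation of the soliton equation gives
\[
  \nabla_i R_{jk}-\nabla_j R_{ik}=R_{ijkl}f_l ,
\]
while the twice-contracted second Bianchi identity yields $(\div\Rm)_{ijk}=\nabla^l R_{ijkl}=\nabla_i R_{jk}-\nabla_j R_{ik}$. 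Expanding the square and relabelling indices then shows
\[
  |\div\Rm|^2=2|\nabla\Ric|^2-2\,\nabla_i R_{jk}\,\nabla_j R_{ik},
\]
so that \eqref{eq:cao2007(jga)-(1.11)} is equivalent to the single relation $\int_M \nabla_i R_{jk}\,\nabla_j R_{ik}\,e^{-f}\,d\mu=\half\int_M |\nabla\Ric|^2 e^{-f}\,d\mu$. On a closed manifold this is the weighted integration by parts of \cite{CXD2007}: one moves a derivative across, uses $\div(e^{-f}X)=e^{-f}(\div X-\inner{X}{\nabla f})$ to pick up the drift term, and simplifies the resulting curvature terms via $\nabla_k R=2R_{kl}f_l$ together with \eqref{eq:grs}.

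Second, before attempting any integration by parts on the noncompact $M$, I would establish the a priori integrability that both makes \eqref{eq:cao2007(jga)-(1.11)} meaningful and drives the cutoff limit. Hamilton's identity $R+|\nabla f|^2=2\rho f+\mathrm{const}$ combined with the lower Ricci bound controls $f$ and $|\nabla f|$: one obtains that $f$ grows at most quadratically and, crucially, admits a quadratic lower bound, so that $e^{-f}$ decays like a Gaussian in $r(x)$. Since the lower Ricci bound also forces at most exponential volume growth through Bishop--Gromov, one gets $\int_M e^{-f}\,d\mu<\infty$. The hypothesis \eqref{eq:NW2007-(1.1)} then bounds $|\Rm|$, and the key identity $\nabla_i R_{jk}-\nabla_j R_{ik}=R_{ijkl}f_l$ with $\nabla_k R=2R_{kl}f_l$ bounds $|\nabla\Ric|$ by $|\Rm|\,|\nabla f|$, so that $|\nabla\Ric|^2 e^{-f}$ and $|\div\Rm|^2 e^{-f}$ are dominated by $e^{2a(r+1)}$ times the Gaussian weight and are integrable.

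Finally I would run the integration by parts with a cutoff. Fix $x_0$ and choose $\phi_\rho$ with $\phi_\rho\equiv1$ on $B(x_0,\rho)$, $\mathrm{supp}\,\phi_\rho\subset B(x_0,2\rho)$ and $|\nabla\phi_\rho|\le C/\rho$, and carry out the computation above with $\phi_\rho$ inserted into every integrand. The interior terms converge to the corresponding terms of the identity by dominated convergence, using the integrability of the previous step. Each error term has the form $\int \inner{\nabla\phi_\rho}{\,\cdot\,}\,e^{-f}\,d\mu$ with the dot a product of curvature and $\nabla\Ric$; by Cauchy--Schwarz it is bounded by $\big(\int|\nabla\phi_\rho|^2 e^{-f}\big)^{1/2}\big(\int_{B(x_0,2\rho)\setminus B(x_0,\rho)}(\cdots)^2 e^{-f}\big)^{1/2}$, which tends to $0$ since $|\nabla\phi_\rho|\le C/\rho$ and the annular integral is a tail of a finite integral. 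Letting $\rho\to\infty$ yields \eqref{eq:cao2007(jga)-(1.11)}.

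The main obstacle is the second step, namely the quantitative control of $f$: one must show that the lower bound on $f$ is strong enough (Gaussian, hence beating the \emph{arbitrary} exponential rate $a$ permitted in \eqref{eq:NW2007-(1.1)}) to guarantee both $\int_M e^{-f}\,d\mu<\infty$ and the finiteness of the two curvature integrals, and thereby to force the cutoff error terms to vanish. Deriving these growth and decay estimates from the lower Ricci bound and Hamilton's identity, and verifying that they also control the derivatives of curvature entering $\div\Rm$, is where the real work lies.
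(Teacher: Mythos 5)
Your overall skeleton --- reduce \eqref{eq:cao2007(jga)-(1.11)} to the weighted integrations by parts of \cite{CXD2007} and justify them on the noncompact manifold via decay of $e^{-f}$ plus Bishop--Gromov volume growth --- is exactly the paper's strategy, but the proposal has two genuine gaps, one of which you flag yourself without resolving. First, you assert that $f$ ``admits a quadratic lower bound, so that $e^{-f}$ decays like a Gaussian,'' and your entire third step depends on this, yet you give no argument; and indeed no such unconditional Gaussian bound is proved here (nor was one available from just Hamilton's identity and a lower Ricci bound at the time). What the paper actually proves, in Lemma \ref{lem:NW2007-(2.3)}, is weaker but sufficient: for \emph{every} $D>0$ there is a bound $f(x)\geq\min\{\frac{\rho}{2}r(x)^2,\ \frac{D}{2\rho}r(x)\}$ for large $r(x)$, obtained from a second variation argument along minimal geodesics with a case split on whether $R(x)\leq Dr(x)$ (where a Ni-type argument yields the quadratic bound) or $R(x)\geq Dr(x)$ (where $R+|\nabla f|^2=2\rho f$ directly gives $f\geq\frac{D}{2\rho}r(x)$). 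Since $D$ is arbitrary, linear-exponential decay $e^{-f}\leq e^{-\frac{D}{2\rho}r}$ with arbitrarily large rate beats the fixed exponential rate $a$ in \eqref{eq:NW2007-(1.1)}; this case analysis is precisely the ``real work'' your second step leaves undone.

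Second, your claimed pointwise estimate is false: the soliton identity $\nabla_i R_{jk}-\nabla_j R_{ik}=R_{ijkl}f_l$ controls only the antisymmetrized derivative, i.e.\ $|\div\Rm|\leq|\Rm|\,|\nabla f|$; it says nothing about the symmetric part of $\nabla\Ric$, so it does \emph{not} bound $|\nabla\Ric|$ by $|\Rm|\,|\nabla f|$. Consequently neither the integrability of $|\nabla\Ric|^2e^{-f}$ nor the vanishing of your cutoff error terms (which involve $\nabla\Ric$, and --- through the IBP chain of \cite{CXD2007}, e.g.\ the term $\nabla_i\nabla_jR_{ik}$ --- even second derivatives of the Ricci tensor) is justified by what you have. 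The paper supplies the missing ingredient by viewing the soliton as a self-similar Ricci flow on $t\in[-1,0]$ and invoking Shi's local derivative estimates \cite{shi89} under hypothesis \eqref{eq:NW2007-(1.1)}, obtaining $|\nabla\Rm|(x)\leq C\exp(\frac{9}{4}ar(x))$ and $|\nabla\nabla\Rm|(x)\leq C\exp(\frac{27}{8}ar(x))$; these exponential derivative bounds, combined with the $e^{-\frac{D}{2\rho}r}$ decay for large $D$ and the exponential volume growth from the Ricci lower bound, make every boundary term and every interior integral finite. Without a derivative estimate of this kind, your dominated-convergence and Cauchy--Schwarz error estimates in the cutoff argument do not close.
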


As an application of the above integral identity
(\ref{eq:cao2007(jga)-(1.11)}), we will consider gradient shrinking Ricci solitons
(GSRS) which are
locally conformally flat. Such classification has also been
considered in \cite{elm06}, \cite{NW2007} and \cite{pw07}.
We will then
prove the following theorem for closed Ricci solitons.

\begin{theorem}\label{main theorem 01}
Let $(M,g)$ be a closed gradient shrinking Ricci soliton. Assume
that $(M,g)$ is locally conformally flat. Then $(M, g)$ must be
Einstein. Moreover, $(M, g)$ is of constant sectional curvature.
\end{theorem}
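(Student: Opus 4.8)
The plan is to prove Theorem \ref{main theorem 01} by combining the integral identity with the geometric consequences of local conformal flatness.

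First I would recall that for a locally conformally flat manifold the Weyl tensor vanishes, so the full curvature tensor is determined by the Ricci tensor and scalar curvature through the Schouten tensor:
\begin{equation}
   R_{ijkl}=\frac{1}{n-2}\left(R_{ik}g_{jl}-R_{il}g_{jk}+R_{jl}g_{ik}-R_{jk}g_{il}\right)
   -\frac{R}{(n-1)(n-2)}\left(g_{ik}g_{jl}-g_{il}g_{jk}\right).
\end{equation}
Differentiating this expression and contracting, I expect to obtain a pointwise relation between $\nabla\Ric$ and $\div\Rm$; the second contracted Bianchi identity $\div\Rm=\nabla\Ric-\frac{1}{2}\nabla R\cdot g$ (appropriately interpreted) together with the vanishing of the Cotton tensor (which in the locally conformally flat case for $n\geq 4$ follows from $W=0$) should yield the precise algebraic link. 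The upshot I am aiming for is an inequality of the form $|\div\Rm|^{2}\leq|\nabla\Ric|^{2}$ at every point, with equality exactly when the Ricci tensor is parallel up to the scalar-curvature terms.

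Second, since $M$ is closed the integration by parts needed for the identity \eqref{eq:cao2007(jga)-(1.11)} is automatic (the curvature and growth hypotheses of Theorem \ref{lem:C2007-(1.4)-B} are vacuously satisfied on a compact manifold), so I may invoke
\begin{equation}
   \int_{M}|\nabla\Ric|^{2}e^{-f}\,d\mu=\int_{M}|\div\Rm|^{2}e^{-f}\,d\mu.
\end{equation}
Combining this global equality with the pointwise inequality $|\div\Rm|^{2}\leq|\nabla\Ric|^{2}$ forces equality to hold pointwise, which in turn forces the relevant trace-free part of $\nabla\Ric$ to vanish. Analyzing the equality case of the Cotton/Schouten computation then shows that the trace-free Ricci tensor is covariantly parallel or, more strongly, that $\nabla_i R_{jk}$ is fully determined by $\nabla R$ in a way that makes the Einstein tensor rigid.

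Third, to pass from ``parallel-type'' Ricci information to the Einstein condition I would use the soliton structure. Contracting the soliton equation \eqref{eq:grs} and applying the standard soliton identities (such as $R+|\nabla f|^{2}-2\rho f=\mathrm{const}$ and $\nabla_i R=2R_{ij}\nabla_j f$) together with the locally conformally flat relation should show that $f$ is constant or that the trace-free Ricci tensor vanishes identically, giving $R_{ij}=\frac{R}{n}g_{ij}$, i.e.\ $(M,g)$ is Einstein. Finally, an Einstein and locally conformally flat manifold has vanishing Weyl tensor and constant scalar curvature, hence constant sectional curvature by the Schouten-tensor formula above. The main obstacle I anticipate is the equality-case analysis in the second step: extracting from the integral equality the correct pointwise consequence requires carefully identifying which contraction of $\nabla\Ric$ controls $\div\Rm$ and verifying that the Cauchy--Schwarz-type estimate is sharp precisely when the soliton is Einstein, so the bookkeeping of the Bianchi and Cotton identities in dimension $n$ is where the real care is needed.
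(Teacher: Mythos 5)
Your proposal is correct and follows essentially the same route as the paper's proof: the closed-case integral identity of Lemma \ref{lem:cao2007-(1.4)}, the vanishing of the Cotton tensor \eqref{eq:chow1992(cpam)-(2.5)} combined with the contracted Bianchi identity \eqref{eq:cao2007(jga)-(2.1)} to compare $\div\Rm$ with $\nabla\Ric$, and then the soliton trace equation integrated over the closed manifold to force $f$ constant, hence Einstein and, by \eqref{rmc}, constant sectional curvature. The one refinement worth noting is that the paper computes exact constants, $|\div\Rm|^{2}=\frac{1}{2(n-1)}|\nabla R|^{2}$ against $|\nabla\Ric|^{2}\geq\frac{1}{n}|\nabla R|^{2}$, so the strict gap $\frac{1}{2(n-1)}<\frac{1}{n}$ for $n\geq 3$ forces $\nabla R\equiv 0$ directly from the integral identity, rendering unnecessary the delicate pointwise equality-case analysis of the unit-constant inequality $|\div\Rm|^{2}\leq|\nabla\Ric|^{2}$ that you flagged as the main obstacle.
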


\begin{remark}
In this case, the problem has also been studied by M. Eminenti,
G. La Nave
and C. Mantegazza \cite{elm06} using a different method.
\end{remark}

In the complete noncompact case, the identity (\ref{eq:cao2007(jga)-(1.11)})
again yields a
classification of locally conformally flat GSRSs with
Ricci curvature bounded from below.

\begin{theorem}\label{main theorem 02}
Let $(M\sp n,g)$, $n\geq 3$, be a complete noncompact gradient
shrinking soliton whose Ricci curvature is bounded from below.
Assume that $(M,g)$ is locally conformally flat. Then its universal cover is either
$S^{n-1}\times\R$ or $\R^{n}$.
\end{theorem}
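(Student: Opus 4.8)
The plan is to let local conformal flatness collapse the two sides of the identity \eqref{eq:cao2007(jga)-(1.11)} onto multiples of $\int_M|\nabla R|^2 e^{-f}\,d\mu$, conclude that the Ricci tensor is parallel, and then read off the geometry. At the outset I would dispose of the flat case (its universal cover is $\R^n$ and there is nothing to prove) and, assuming $(M,g)$ non-flat, verify the growth hypothesis \eqref{eq:NW2007-(1.1)} needed to apply Theorem \ref{lem:C2007-(1.4)-B}. This verification is where the preliminary work lies: on a shrinking soliton one has $R+|\nabla f|^2=2\rho f+\mathrm{const}$ with the potential $f$ growing at most quadratically, so $R$ grows at most quadratically; since $(M,g)$ is locally conformally flat the tensor $\Rm$ is an algebraic expression in $\Ric$ and $R$, and with $\Ric$ bounded from below each Ricci eigenvalue is then pinched between a constant and $R+\mathrm{const}$, giving $|\Rm|$ at most polynomial --- in particular exponential --- growth. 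Hence \eqref{eq:cao2007(jga)-(1.11)} is available.

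Next I would compute both integrands pointwise. For $n\geq 3$ vanishing of the Weyl tensor forces the Cotton tensor to vanish, i.e. $\nabla_k R_{jl}-\nabla_l R_{jk}=\frac{1}{2(n-1)}(g_{jl}\nabla_k R-g_{jk}\nabla_l R)$; combined with the contracted second Bianchi identity $(\div\Rm)_{jkl}=\nabla_k R_{jl}-\nabla_l R_{jk}$ a direct contraction gives $|\div\Rm|^2=\frac{1}{2(n-1)}|\nabla R|^2$. Decomposing $\Ric$ into its trace-free and pure-trace parts yields the elementary bound $|\nabla\Ric|^2\geq\frac{1}{n}|\nabla R|^2$. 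Feeding these into \eqref{eq:cao2007(jga)-(1.11)} gives
\[
   \int_M|\nabla\Ric|^2 e^{-f}\,d\mu
   =\frac{1}{2(n-1)}\int_M|\nabla R|^2 e^{-f}\,d\mu
   \leq\frac{n}{2(n-1)}\int_M|\nabla\Ric|^2 e^{-f}\,d\mu.
\]
Since $n\geq 3$ makes $\frac{n}{2(n-1)}<1$, the only possibility is $\int_M|\nabla\Ric|^2 e^{-f}\,d\mu=0$, so $\nabla\Ric\equiv 0$; in particular $R$ is a positive constant.

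It remains to classify a non-flat complete locally conformally flat gradient shrinking soliton with parallel Ricci tensor. Differentiating \eqref{eq:grs} shows $\nabla^2 f$ is parallel, and the soliton identity $\nabla_j R=2R_{jk}\nabla^k f$ with $\nabla R=0$ gives $\Ric(\nabla f)=0$, so $\nabla f\in\ker\Ric$ and $\nabla_{\nabla f}\nabla f=\rho\,\nabla f$. As $M$ is noncompact $f$ cannot be constant (else $\Ric=\rho g$ would be positive Einstein, hence compact by Bonnet--Myers), so $\ker\Ric$ is a nontrivial parallel distribution. By the de Rham theorem the universal cover splits as $\widetilde M=M_0\times M_1$ along $\ker\Ric$ and its orthogonal complement: on $M_0$ one has $\nabla^2 f=\rho g$, forcing $M_0\cong\R^{k}$, while on $M_1$ the function $f$ is constant, so $\Ric|_{M_1}=\rho g|_{M_1}$ is positive Einstein and $M_1$ is simply connected and complete. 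Finally I would exploit conformal flatness once more: a product has vanishing mixed sectional curvatures, whereas the Weyl-free formula $K(e_i,e_j)=\frac{1}{n-2}(\lambda_i+\lambda_j)-\frac{R}{(n-1)(n-2)}$ gives, for a plane spanning the two factors, $\frac{\rho}{n-2}-\frac{R}{(n-1)(n-2)}$; setting this to zero and using $R=(n-k)\rho$ forces $k=1$. Therefore either $\Ric\equiv 0$ and $\widetilde M=\R^n$, or $k=1$ and $M_1=S^{n-1}$ has constant curvature $\rho/(n-2)$, so $\widetilde M=S^{n-1}\times\R$, as asserted.

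I expect the principal obstacle to be this last geometric step --- assembling the de Rham splitting and the eigenvalue bookkeeping so as to exclude intermediate kernel dimensions $2\leq k\leq n-1$ --- together with the preliminary curvature-growth estimate that licenses Theorem \ref{lem:C2007-(1.4)-B}, since that estimate must genuinely combine the soliton structure with local conformal flatness.
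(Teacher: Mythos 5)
Your proposal is correct, and while its first half coincides with the paper's, the endgame is genuinely different. Like the paper, you verify the growth hypothesis \eqref{eq:NW2007-(1.1)} from $R+|\nabla f|^2=2\rho f$ and the quadratic bound on $f$, using \eqref{rmc} and the lower Ricci bound to get polynomial growth of $|\Rm|$; you then run the same Cotton-tensor computation $|\div\Rm|^2=\frac{1}{2(n-1)}|\nabla R|^2$ and trace inequality $|\nabla\Ric|^2\geq\frac1n|\nabla R|^2$ through \eqref{eq:cao2007(jga)-(1.11)} to conclude $\nabla\Ric\equiv 0$. From there the paper argues pointwise: it diagonalizes $\Ric$, uses $R_{ik}f_{ij}=0$ to force each eigenvalue into $\{0,\rho\}$, invokes Berger's identity $\sum_{i<j}R_{ijij}(\lambda_i-\lambda_j)^2=0$ to exclude kernel dimensions $2\leq m\leq n-1$ (the computation yields $m(m-1)(n-m)\rho^3=0$), and finally outsources the identification to Wolf's classification of simply connected locally symmetric spaces of nonnegative curvature. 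You instead split: $\Ric(\nabla f)=0$ plus parallelism makes $\ker\Ric$ a nontrivial parallel distribution (nontrivial since $f$ constant would force compactness via Myers), de Rham splits $\widetilde M=M_0\times M_1$, and the observation that $\nabla f$ is tangent to $M_0$ makes $f$ a function of $M_0$ alone, whence $\Ric_1=\rho g_1$ --- recovering the eigenvalue dichotomy structurally rather than pointwise; vanishing of the mixed sectional curvatures of a product, compared with the formula from \eqref{rmc}, gives $\frac{(k-1)\rho}{(n-1)(n-2)}=0$, which is exactly the Berger computation in disguise but needs no appeal to that identity, and planes inside $M_1$ then have constant curvature $\rho/(n-2)$, identifying $M_1=S^{n-1}$ directly without Wolf's theorem. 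Your route is thus more self-contained and explicit (modulo the de Rham theorem; note that your citation of Tashiro for $M_0\cong\R^k$ is actually dispensable, since $k=1$ is forced first and a complete simply connected one-manifold is trivially $\R$), while the paper's avoids splitting machinery at the cost of ending on a classification result. Two harmless slips: your claim that $R$ is a \emph{positive} constant immediately after $\nabla\Ric=0$ is not yet justified at that point, but is never used; and your terminal branch ``$\Ric\equiv 0$ and $\widetilde M=\R^n$'' is really the flat case you disposed of at the outset, since $\Ric\equiv 0$ with vanishing Weyl tensor forces $\Rm\equiv 0$.
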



\begin{remark}
In the case of locally conformally flat GSRSs, the condition of
(\ref{eq:NW2007-(1.1)}) is same as $$|R_{ij}|(x)\leq\exp(a(r(x) +
1)).$$ This condition is not needed in Theorem 3.
\end{remark}

\begin{remark} L. Ni and N. Wallach (\cite{NW2007})
proved a slightly different version of this theorem. P. Petersen
and W. Wylie (\cite{pw07}) also proved a similar result. In dimension $3$,
their results do not require that $M$ to be locally conformally flat.

After we have finished the proof of Theorem \ref{main theorem 01},
we saw that L. Ni and N. Wallach have just proved their theorem,
from which we realize that our proof can be extended to the complete
noncompact case. The
approach here is different.
\end{remark}

\begin{remark}
An early version of this paper by the first two authors appeared
on arXiv July 2008 (arXiv:0807.0588v2). Most recently, L. Chen and
W. Chen \cite{cc08} informed us that they can improve our result based
on a similar idea.
\end{remark}

The rest of this paper is organized as follows. In Section 2, we
will prove Theorem 2 for closed gradient Ricci solitons. The proof
makes use of an integral identity obtained by the first author
\cite{CXD2007}. In Section 3, we will deal with the complete
noncompact case. We first prove the above mentioned integral
identity on complete gradient Ricci solitons (Theorem 1), then we
will finish the proof of Theorem 3.

{\bf Acknowledgement:} The authors would like to thank Professor
Lei Ni for his interest and encouragement for us to finish the
writing of this paper.

\section{Compact Case}
In this section, we will deal with the case of closed Ricci
solitons and prove Theorem \ref{main theorem 01}. We first recall
the following lemma from \cite[Corollary 1]{CXD2007}. Note that
the proof only uses the fact that $(M,g)$ is a closed gradient
soliton.

\begin{lemma}\label{lem:cao2007-(1.4)}
Suppose that $(M,g)$ is a closed gradient Ricci soliton, then we have
   \begin{equation}\label{eq:closed}
      \int_{M}|\nabla\Ric|^{2}e^{-f} d\mu=
      \int_{M}|\div\Rm|^{2}e^{-f} d\mu\ .
   \end{equation}
\end{lemma}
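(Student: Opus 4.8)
The plan is to reduce \eqref{eq:closed} to an integration-by-parts computation driven entirely by the soliton structure. First I would record the standard differential consequences of \eqref{eq:grs}, all of which use only that $(M,g)$ is a gradient soliton, consistent with the stated hypothesis that closedness is the sole global assumption. Tracing \eqref{eq:grs} gives $R+\Delta f=n\rho$; combining the contracted second Bianchi identity $\nabla^i R_{ij}=\tfrac12\nabla_j R$ with \eqref{eq:grs} gives $\nabla_i R=2R_{ij}\nabla_j f$; and differentiating \eqref{eq:grs} once more and commuting covariant derivatives (the Ricci identity applied to the one-form $\nabla f$) gives the crucial soliton identity $\nabla_i R_{jk}-\nabla_j R_{ik}=R_{ijkl}\nabla_l f$.

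Next I would use the contracted second Bianchi identity in the form $(\div\Rm)_{jkl}=\nabla^i R_{ijkl}=\nabla_k R_{jl}-\nabla_l R_{jk}$, so that expanding the square yields $|\div\Rm|^2=2|\nabla\Ric|^2-2\langle\nabla_k R_{jl},\nabla_l R_{jk}\rangle$. Substituting this into the desired identity reduces everything to proving the single cross-term identity $\int_M\langle\nabla_k R_{jl},\nabla_l R_{jk}\rangle e^{-f}d\mu=\tfrac12\int_M|\nabla\Ric|^2 e^{-f}d\mu$.

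The main tool is then weighted integration by parts on the closed manifold: for any vector field $X$ one has $\int_M\div(X)e^{-f}d\mu=\int_M\langle X,\nabla f\rangle e^{-f}d\mu$, with no boundary contribution precisely because $M$ is closed. I would integrate the cross term by parts in the $k$-index, commute the resulting second covariant derivatives back into the form $\nabla_k R_{jk}=\tfrac12\nabla_j R$ plus curvature commutator terms, and rewrite every contraction of $\nabla f$ against Ricci using $\nabla_i R=2R_{ij}\nabla_j f$ and every Hessian of $f$ using $\nabla_i\nabla_j f=\rho g_{ij}-R_{ij}$. The second-derivative-of-$R$ terms cancel in pairs, leaving a combination of cubic curvature integrals (a $\mathrm{tr}(\Ric^3)$-type term and a $\Rm*\Ric*\Ric$ term) together with $\rho\int_M|\Ric|^2 e^{-f}d\mu$.

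The hard part is showing that this leftover combination is exactly what reassembles $\tfrac12\int_M|\nabla\Ric|^2 e^{-f}d\mu$; the cross term alone does not visibly equal the target, so a companion identity is needed. For this I would integrate a drift--Bochner formula for the weighted Laplacian $\Delta_f=\Delta-\langle\nabla f,\nabla\cdot\rangle$ applied to $|\Ric|^2$, using $\int_M\Delta_f u\,e^{-f}d\mu=0$ on a closed manifold and the constancy of the Bakry--\'Emery tensor $\Ric+\nabla^2 f=\rho g$; this expresses $\int_M|\nabla\Ric|^2 e^{-f}d\mu$ in terms of the very same cubic curvature integrals (with the scalar identity $\int_M|\Ric|^2 e^{-f}d\mu=\rho\int_M R\,e^{-f}d\mu$ handling the $\rho$-term). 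Matching the two computations closes the argument. The genuine difficulty is the bookkeeping of signs and index symmetries in the commutator and cubic terms; the geometric inputs that make the cancellation possible are the constancy of the Bakry--\'Emery tensor and the vanishing of boundary terms on a closed manifold.
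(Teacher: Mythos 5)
Your proposal is correct and follows essentially the route of the paper, which cites the proof to \cite[Corollary 1]{CXD2007}: the same Bianchi decomposition $(\div\Rm)_{jkl}=\nabla_kR_{jl}-\nabla_lR_{jk}$, reduction to the cross term, and weighted integrations by parts driven by the soliton identities $\nabla_i(R_{ik}e^{-f})=0$, $\nabla_i(R_{ijkl}e^{-f})=0$ and $f_{ij}=\rho g_{ij}-R_{ij}$ --- precisely the IBP skeleton the paper itself lists in Section 3 when extending the lemma to the noncompact case. Your companion drift--Bochner computation of $\int_M|\nabla\Ric|^2e^{-f}d\mu$ via $\Delta_f\Ric$ is a cosmetic repackaging of the same commutator bookkeeping, and both sides do reduce to the matching cubic integrals $\int_M R_{ijkl}R_{ik}R_{jl}e^{-f}d\mu-\rho\int_M|\Ric|^2e^{-f}d\mu$ as you predict.
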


As $(M,g)$ is locally conformally flat, we have
   \begin{align}\nonumber \label{rmc}
      R_{ijkl}=&\,\frac{1}{n-2}\,(R_{ik}g_{jl}+R_{jl}g_{ik}
                  -R_{il}g_{jk}-R_{jk}g_{il})\\
               &\,-\frac{R}{(n-1)(n-2)}\,
                  (g_{ik}g_{jl}-g_{il}g_{jk})\ .
   \end{align}
Moreover, the following identity holds (see
\cite{E}, Eq. (28.19)),
   \begin{equation}\label{eq:chow1992(cpam)-(2.5)}
       \nabla_{k}R_{ij}-\nabla_{j}R_{ik}=\frac{1}{2(n-1)}\,
       (\nabla_{k}Rg_{ij}-\nabla_{j}Rg_{ik})\ .
   \end{equation}
On a closed Riemannian manifold $(M,g)$, it follows from the
second Bianchi identity that
\begin{align}
      (\div\Rm)_{jkl}
          &=\nabla_{i}R_{ijkl}=\nabla_{i}R_{klij}
           =-\nabla_{k}R_{ijli}-\nabla_{l}R_{ijik}\notag\\
           \label{eq:cao2007(jga)-(2.1)}
          &=\nabla_{k}R_{jl}-\nabla_{l}R_{jk}\ .
   \end{align}
As a consequence of \eqref{eq:chow1992(cpam)-(2.5)} and
\eqref{eq:cao2007(jga)-(2.1)}, on a closed locally conformally
flat gradient Ricci soliton, we arrive at
   \begin{equation*}
        |\div\Rm|^{2}=|\nabla_{k}R_{ij}-\nabla_{j}R_{ik}|^{2}
        =\frac{1}{2(n-1)}\,|\nabla{}R|^{2}\ .
   \end{equation*}
The above identity and \eqref{eq:closed} now imply that
   \begin{equation}\label{eq:dRC-ge-dR}
        \int_{M}|\nabla\Ric|^{2}e^{-f}d\mu=
        \frac{1}{2(n-1)}\int_{M}|\nabla{}R|^{2}e^{-f}d\mu\ .
   \end{equation}
Since we have
   \begin{equation*}
      0\leq\left|\nabla_{k}R_{ij}-
      \frac{1}{n}\,\nabla_{k}Rg_{ij}\right|^{2}=
      |\nabla\Ric|^{2}-\frac{1}{n}\,|\nabla{}R|^{2}\ ,
   \end{equation*}
this implies
   \begin{equation*}
      \frac{1}{n}\,|\nabla{}R|^{2}\leq{}|\nabla\Ric|^{2}\ .
   \end{equation*}
Plugging this into \eqref{eq:dRC-ge-dR}, we obtain the following
inequality
   \begin{equation*}
      \frac{1}{n}\int_{M}|\nabla{}R|^{2}e^{-f}d\mu \leq
      \frac{1}{2(n-1)}\int_{M}|\nabla{}R|^{2}e^{-f}d\mu \ .
   \end{equation*}
But this is only possible if $n\leq{}2$ or $\nabla{}R\equiv{}0$.
Since we have already assumed that $n\geq{}3$, we must have
$\nabla{}R=0$. Hence that the scalar
curvature $R$ is constant. Take trace on both sides of
\eqref{eq:grs}, then we have
\begin{equation}\label{eq:grs (scalar case)}
   R+\Delta{}f=n\rho\ ,
\end{equation}
take integration on both sides of \eqref{eq:grs (scalar case)},
we have
\begin{equation*}
   (n\rho-R)\Vol(M)=\int_{M}\Delta{}fd\mu=0\ ,
\end{equation*}
here we use the fact that $M$ is closed. Therefore $R=n\rho$, and
then $\Delta{}f=0$. So  $f$ must be a constant. From
\eqref{eq:grs}, we know that $(M,g)$ is Einstein, i.e.,
\begin{equation*}
   R_{ij}=\frac{R}{n}\,g_{ij}\ .
\end{equation*}
Plugging this into \eqref{rmc} implies that
\begin{equation*}
   R_{ijkl}=\frac{R}{n(n-1)}\,(g_{ik}g_{jl}-g_{il}g_{jk})\ ,
\end{equation*}
and $R$ is a constant, so $(M, g)$ is of constant curvature. This
finishes the proof of Theorem \ref{main theorem 01}.

\section{Complete Noncompact Case}

In this section, we will first extend Lemma
\ref{lem:cao2007-(1.4)}  to the case of complete noncompact
gradient Ricci solitons (Theorem \ref{lem:C2007-(1.4)-B}). We will
prove that equality (\ref{eq:cao2007(jga)-(1.11)}) is also true
for a complete gradient shrinking soliton whose Ricci curvature is
bounded from below. Then we will finish the proof of Theorem
\ref{main theorem 02}.

Before we prove Theorem \ref{lem:C2007-(1.4)-B}, we will need the
following lemmas.

\begin{lemma}[\cite{CXD2007}]Suppose $(M,g)$ is a
complete gradient Ricci soliton, then we have the identities
\begin{equation}\label{eq:cao2007-(2.2)}
   \nabla_{i}(R_{ijkl}e^{-f})=0,
\end{equation}
and
\begin{equation}\label{eq:cao2007-(2.3)}
   \nabla_{i}(R_{ik}e^{-f})=0\ .
\end{equation}
\end{lemma}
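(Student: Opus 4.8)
The plan is to prove both identities pointwise and algebraically, directly from the soliton equation \eqref{eq:grs}; no integration or global hypotheses enter, which is why they hold on any complete gradient soliton. After applying the product rule together with $\nabla_{i}(e^{-f})=-(\nabla_{i}f)e^{-f}$, the two claimed identities are equivalent to the ``weighted divergence'' relations
\begin{equation*}
   \nabla_{i}R_{ijkl}=R_{ijkl}\nabla_{i}f
   \qquad\text{and}\qquad
   \nabla_{i}R_{ik}=R_{ik}\nabla_{i}f,
\end{equation*}
so it suffices to establish these two.

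For the Ricci version I would first record the standard consequences of \eqref{eq:grs}. Tracing gives $R+\Delta f=n\rho$, hence $\nabla_{j}\Delta f=-\nabla_{j}R$, while contracting the second Bianchi identity gives $\nabla_{i}R_{ij}=\half\nabla_{j}R$. Applying $\nabla_{i}$ to the soliton equation in the form $\nabla_{i}\nabla_{j}f=\rho g_{ij}-R_{ij}$ and contracting yields $\Delta\nabla_{j}f=-\nabla_{i}R_{ij}=-\half\nabla_{j}R$. On the other hand, the Bochner commutation formula gives $\Delta\nabla_{j}f=\nabla_{j}\Delta f+R_{jk}\nabla_{k}f=-\nabla_{j}R+R_{jk}\nabla_{k}f$. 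Comparing the two expressions produces $\half\nabla_{j}R=R_{jk}\nabla_{k}f$, which is exactly $\nabla_{i}R_{ik}=R_{ik}\nabla_{i}f$, and hence $\nabla_{i}(R_{ik}e^{-f})=0$.

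For the full curvature identity I would differentiate $\nabla_{l}\nabla_{j}f=\rho g_{lj}-R_{lj}$ once more to obtain $\nabla_{k}\nabla_{l}\nabla_{j}f=-\nabla_{k}R_{lj}$, and then commute the two outer derivatives. The Ricci identity applied to the one-form $\nabla f$ expresses $\nabla_{k}\nabla_{l}\nabla_{j}f-\nabla_{l}\nabla_{k}\nabla_{j}f$ as a curvature term contracted against $\nabla f$, so that $\nabla_{l}R_{kj}-\nabla_{k}R_{lj}$ equals $R_{kljm}\nabla_{m}f$ up to the convention's sign. Combining this with the second Bianchi identity in the form $\nabla_{i}R_{ijkl}=\nabla_{k}R_{jl}-\nabla_{l}R_{jk}$ already used in \eqref{eq:cao2007(jga)-(2.1)}, together with the symmetries $R_{ijkl}=R_{klij}=-R_{jikl}$, identifies $\nabla_{i}R_{ijkl}$ with $R_{ijkl}\nabla_{i}f$ and gives $\nabla_{i}(R_{ijkl}e^{-f})=0$. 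The Ricci version can alternatively be recovered by tracing this identity with $g^{jl}$, since $\nabla$ and $e^{-f}$ commute with the metric. The only delicate point is the careful bookkeeping of the curvature sign convention and the index symmetries in the commutation step; everything else is direct substitution, and there is no analytic obstacle since the statement is purely local.
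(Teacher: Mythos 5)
Your proof is correct: the Bochner/contracted-Bianchi computation giving $\tfrac12\nabla_j R=R_{jk}\nabla_k f$ and the commutation of $\nabla_k\nabla_l\nabla_j f=-\nabla_k R_{lj}$ combined with $\nabla_i R_{ijkl}=\nabla_k R_{jl}-\nabla_l R_{jk}$ is exactly the standard derivation, and the sign bookkeeping you flag is indeed routine under any consistent curvature convention. Note that the paper itself gives no proof here --- it quotes the lemma from \cite{CXD2007} --- and your argument coincides with the derivation in that cited source, so this is essentially the same approach.
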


\begin{lemma}\label{lem:NW2007-(2.3)}
Assume the same hypothesis as in Theorem \ref{lem:C2007-(1.4)-B}.
For any $D>0$, there exist constants $B>0$ and $C>0$ such that
\begin{equation}\label{eq:NW2007-(2.4)}
  f(x)\geqslant min\{\frac{\rho}{2}r(x)^2, \frac{Dr(x)}{3\rho}\},
\end{equation}
and
\begin{equation}
   f(x)\leq{}Cr^{2}(x)\ ,\quad
   |\nabla{}f|(x)\leq{}Cr(x)
\end{equation}
for $r(x)\geq{}B$. Here $r(x)$ is the distance function to some
fixed point $O\in{}M$, and $\rho$ is the same constant given by
\eqref{eq:grs}.
\end{lemma}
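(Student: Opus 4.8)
The plan is to derive the whole lemma from two standard soliton identities together with the second variation of arc length. Tracing \eqref{eq:grs} gives $R+\Delta f=n\rho$, and a direct computation from \eqref{eq:grs} and the contracted Bianchi identity $\nabla_j R=2R_{jk}\nabla_k f$ shows that $R+|\nabla f|^2-2\rho f$ is constant on $M$. Since adding a constant to $f$ leaves \eqref{eq:grs} unchanged, I would normalize this constant and write $R+|\nabla f|^2=2\rho f$. The hypothesis that $\Ric$ is bounded below, say $\Ric\geq-(n-1)\lambda g$ with $\lambda\geq0$, then gives $R\geq-n(n-1)\lambda$, so that $f$ is bounded below and $|\nabla f|^2=2\rho f-R\leq 2\rho f+C_0$ for a constant $C_0$; this inequality drives the upper estimates.

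For the two upper bounds I would run the usual square-root trick. Setting $w=\sqrt{f+C_0/(2\rho)}$ (well defined since $f$ is bounded below), the inequality $|\nabla f|^2\leq 2\rho\,(f+C_0/(2\rho))$ becomes $|\nabla w|\leq\sqrt{\rho/2}$. Integrating along a minimal geodesic from $O$ to $x$ yields $w(x)\leq w(O)+\sqrt{\rho/2}\,r(x)$, hence $f(x)\leq Cr(x)^2$ for $r(x)\geq B$; feeding this back into $|\nabla f|^2\leq 2\rho f+C_0$ gives $|\nabla f|(x)\leq Cr(x)$. These are the easy half of the lemma.

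The lower bound is the heart of the matter and where I expect the main difficulty. Along a unit-speed minimal geodesic $\gamma$ from $O$, set $u(s)=f(\gamma(s))$; equation \eqref{eq:grs} gives $u''(s)=\rho-\Ric(\gamma'(s),\gamma'(s))$, so a lower bound for $u$ amounts to an \emph{upper} bound for $\int_0^{s}\Ric(\gamma',\gamma')\,dt$. Because $\Ric$ is controlled only from below, this integral cannot be bounded crudely near the far endpoint; the device that saves the argument is the index inequality for the minimizing geodesic, $\int_0^L\Ric(\gamma',\gamma')\phi^2\,ds\leq(n-1)\int_0^L(\phi')^2\,ds$ for every Lipschitz $\phi$ vanishing at the endpoints. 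Choosing the standard cutoff $\phi$ equal to $s$ on $[0,1]$, to $1$ on $[1,L-1]$, and to $L-s$ on $[L-1,L]$, the nonnegative weight $\phi^2$ lets me move the problematic end contribution $\int_{L-1}^L\Ric\,\phi^2$ to the other side and bound it \emph{from below} by the Ricci lower bound; together with the fixed bound for $\Ric$ on the unit ball about $O$, this produces a constant $A$, independent of $L$, with $\int_0^{L-1}\Ric(\gamma',\gamma')\,ds\leq A$. Applying this with $L=s+1$ for each $s\in[1,r(x)-1]$ gives $u'(s)\geq\rho s-A'$, and integrating once more (using $u''\leq\rho+(n-1)\lambda$ on the final unit segment to keep $u'$ large there) yields $f(x)=u(r(x))\geq\tfrac{\rho}{2}r(x)^2-C\,r(x)$.

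Finally, this quadratic lower bound immediately gives the stated form: for any fixed $D>0$ one has $\tfrac{\rho}{2}r^2-Cr\geq\tfrac{Dr}{3\rho}$ as soon as $r\geq B:=\tfrac{2}{\rho}\bigl(C+\tfrac{D}{3\rho}\bigr)$, whence $f(x)\geq\tfrac{Dr(x)}{3\rho}\geq\min\{\tfrac{\rho}{2}r(x)^2,\tfrac{Dr(x)}{3\rho}\}$ for $r(x)\geq B$. The only genuinely delicate point in the whole argument is the extraction of the $L$-independent bound $A$ on $\int_0^{L-1}\Ric\,ds$: it is essential that the index inequality carries the nonnegative weight $\phi^2$, since this is exactly what converts the one-sided control on $\Ric$ into a usable estimate near the far endpoint.
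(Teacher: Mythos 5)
Your preliminaries (the normalization $R+|\nabla f|^2=2\rho f$, the upper bounds $f\leq Cr^2$ and $|\nabla f|\leq Cr$) and your second-variation estimate are sound and match the paper's first steps; in particular the $L$-independent bound $\int_0^{L-1}\Ric(\gamma',\gamma')\,ds\leq A$ is correct, since sub-segments of a minimizing geodesic are minimizing and the end term carries the weight $\phi^2$, so the Ricci lower bound controls it. The genuine gap is the final unit segment $[r(x)-1,\,r(x)]$. You assert that ``$u''\leq\rho+(n-1)\lambda$ on the final unit segment'' keeps $u'$ large there, but that inequality runs in the wrong direction: it is an \emph{upper} bound on $u''$, hence only prevents $u'$ from growing. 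To prevent $u$ from collapsing near $x$ you would need a \emph{lower} bound on $u''=\rho-\Ric(\gamma',\gamma')$, i.e.\ a pointwise upper bound on $\Ric(\gamma',\gamma')$ near the far endpoint, and the hypotheses supply none (Ricci is bounded only from below, and \eqref{eq:NW2007-(1.1)} permits exponential growth). The index-form weight $\phi^2$ vanishes at the endpoint, so it wipes out exactly the stretch you still need: your bound controls $\int_0^{r-1}\Ric$ but says nothing about $\int_{r-1}^{r}\Ric$, and a priori $u$ could drop arbitrarily there. This is precisely the difficulty the paper's proof is organized around, and it is why the lemma concludes with a minimum of two bounds rather than your clean $\tfrac{\rho}{2}r^2-Cr$: the paper runs a dichotomy on $R(x)$. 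If $R(x)\leq Ds_0$, the soliton identity $\nabla_iR=2R_{ij}f_j$ together with the Ricci lower bound gives $|\nabla\log(R+C)|\leq Cr$, which propagates the scalar bound (hence the pointwise bound $\Ric(\gamma',\gamma')\leq R+C$) backward over a segment of length $r_0$ with $s_0r_0=(n-1)/\epsilon$, yielding $\int_0^{s_0}\Ric\leq C+\epsilon s_0$ and the quadratic branch $f\geq\tfrac{\rho}{2}r^2$; if instead $R(x)\geq Ds_0$, then $f(x)=\tfrac{1}{2\rho}(R+|\nabla f|^2)\geq\tfrac{Ds_0}{2\rho}$ directly, giving the linear branch.

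It is worth noting that your own preliminaries contain the material for a repair that your final sentence does not: since $|\nabla f|\leq\sqrt{2\rho f+C_0}$, the function $v(s)=\sqrt{2\rho u(s)+C_0}$ is $\rho$-Lipschitz along $\gamma$, so over the last unit interval $u$ can decrease by at most $v(r-1)+O(1)\leq Cr$, which is lower order against $\tfrac{\rho}{2}r^2$ and closes the argument — indeed yielding the quadratic lower bound with no dichotomy, which is \emph{stronger} than the stated lemma (and is essentially the later sharp estimate of H.-D.\ Cao and D.\ Zhou for shrinkers). But as written, the justification you give for the final segment is incorrect, and the proof does not go through without this substitution.
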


\begin{proof}
We consider a complete noncompact gradient shrinking soliton $(M, g)$. In
other words, over $M$, for some constant $\rho>0$, we have
\begin{equation*}
\nabla^2f+{\rm Ric}-\rho g=0.
\end{equation*}
We also assume that  ${\rm Ric} \geqslant -Kg$ for some $K>0$. In
the following $C$'s might stand for different positive constant,
but they are uniformly chosen.

Fix a point $O\in M$. For any $x\in M$, set $dist_g(O, x)=r(x)=
s_0$. Consider a minimal geodesic, $\gamma(s)$, from $O$
to $x$ with arc-length, $s$, as parameter. In the following, we'll
identify the arc-length parameter with the point on $M$. \\

We only need to consider points whose distance to $O$ is large
enough. We will choose this constant $B$ uniformly as in the
statement of the lemma.

To begin with, we have $\nabla^2f\leqslant Cg$ by soliton equation
(\ref{eq:grs}) and our assumption on the lower bound of Ricci
curvature. Thus $f(x)\leqslant Cr(x)^2$ for $x$ with $r(x)$ large
enough. Moreover, from $R+|\nabla f|^2-2\rho f=0$ \footnote{This
equation can be derived from the original Ricci soliton equation
using second Bianchi identity, where we have normalized the right
hand side to be $0$.} and the lower bound of scalar curvature $R$
(from lower bound of Ricci curvature), we have
$$|\nabla f|^2\leqslant Cr^2.$$

\vspace{0.2in}

It's only left to show lower bound of the growth for $f$ as
(\ref{eq:NW2007-(2.4)}). Let $\{\gamma', E_1, \cdots,E_{n-1}\}$
be a parallel orthonormal basis along this geodesic. For $s_0>2$
and $r_0$ which is relatively small and will be chosen later,
consider the following variation vector field:
\begin{equation*}
Y_i(s)=
\begin{cases}
sE_i(s), ~~~s\in [0, 1], \\
E_i(s), ~~~s\in [1, s_0-r_0], \\
\frac{s_0-s}{r_0}E_i, ~~~ s\in [s_0-r_0, s_0].
\end{cases}
\end{equation*}
By standard second variation formula of the minimal geodesic, we
have
$$\sum_{i=1}^{n-1} \int_0^{s_0} \left(|\nabla_{\gamma'}Y_i|^2-R
(\gamma', Y_i, \gamma', Y_i)\right)ds\geqslant 0,$$ or
\begin{equation*}
\begin{split}
0\leqslant
&\int_0^1\left(n-1-s^2\cdot {\rm Ric}(\gamma', \gamma')\right)
ds+\int_1^{s_0-r_0}\left(-{\rm Ric}(\gamma', \gamma')\right) ds+ \\
&~~~~ +\int_{s_0-r_0}^{s_0}\left(\frac{n-1}{r_0^2}-\left(\frac{s_0-s}
{r_0}\right)^2{\rm Ric}(\gamma', \gamma')\right) ds.
\end{split}
\end{equation*}

We can reformulate it in the following way,
\begin{equation*}
\begin{split}
\int_0^{s_0-r_0}{\rm Ric}(\gamma',\gamma')ds
&\leqslant n-1+\int_0^1(1-s^2){\rm Ric}(\gamma', \gamma')
ds+\frac{n-1}{r_0}+ \\
&~~~~ -\int_{s_0-r_0}^{s_0}\left(\frac{s_0-s}{r_0}\right)^2
{\rm Ric}(\gamma', \gamma')ds.
\end{split}
\end{equation*}
Using Ricci lower bound on the last term of right hand side, and
noticing the uniform bound of the metric in the unit ball around
the fixed point $O$, one arrives at
\begin{equation}
\label{eq:ricci-integral}
\int_0^{s_0-r_0}{\rm Ric}(\gamma',\gamma')ds\leqslant C+\frac
{n-1}{r_0}+Cr_0.
\end{equation}

Our next step is to control the integral of ${\rm Ric}(\gamma',
\gamma')$ between $s_0-r_0$ and $s_0$. Let's consider the
following two cases.

\vspace{0.2in}

\begin{itemize}

\item {\bf Case 1:} If there exists a uniform constant $D>0$
(which will eventually be chosen to be large enough in our
application), such that the scalar curvature at $x$ satisfies
$$R(x)\leqslant Ds_0,$$
then following the argument as in \cite[Proposition 1.1]{ni05}, we
can achieve the desired bound as follows.

Using soliton equation again, we get $\nabla_iR=2R_{ij}f_j$, hence
\begin{equation}
\label{eq:gradient-scalar}
|\nabla R|^2=4|R_{ij}f_j|^2.
\end{equation}

By choosing diagonal forms of ${\rm Ric}$ and $g$, using the lower
bound on Ricci curvature, we have
$$R_{ii}\geqslant -K, ~~~R+C\geqslant R_{ii}.$$
Hence (\ref{eq:gradient-scalar}) implies that
$$|\nabla R|^2\leqslant 4(R+C)^2|\nabla f|^2.$$
It follows that
$$|\nabla\log(R+C)|^2\leqslant 4|\nabla f|^2 \leqslant
Cr^2$$ from previous gradient bound, hence we have
$|\nabla\log(R+C)| \leqslant Cr$.

Now for any $s_1\in [s_0-r_0, s_0]$, we have
$$\log\left(\frac{R(s_1)+C}{R(s_0)+C}\right)\leqslant
\int_{s_1}^{s_0}|\nabla\log(R+C)|ds\leqslant Cs_0 (s_0-s_1).$$ So
we arrive at,
\begin{equation}\label{eq:end-sc}
R(s_1)+C\leqslant (R(s_0)+C) e^{Cs_0(s_0- s_1)}\leqslant
(R(s_0)+C) e^{Cs_0r_0}.
\end{equation}

Now let's pick $r_0$ such that
$$s_0r_0=\frac{n-1}{\epsilon},$$
where $\epsilon$ is a sufficiently small but fixed positive
constant.  It is clear that $r_0\leqslant
C$ for large $s_0$. \\

The integral of Ricci curvature between $s_0-r_0$ and $s_0$ can
now be controlled as follows,
\begin{equation}
\begin{split}
\int_{s_0-r_0}^{s_0}{\rm Ric}(\gamma', \gamma')ds
&\leqslant \int_{s_0-r_0}^{s_0}(R+C)ds \\
&\leqslant r_0(R(s_0)+C)e^{\frac{C(n-1)}{\epsilon}}  \\
&\leqslant r_0(Ds_0+C)e^{\frac{C(n-1)}{\epsilon}} \\
&\leqslant C. \nonumber
\end{split}
\end{equation}
Here we used the assumption on $R(s_0)$ and (\ref{eq:end-sc}) in
the above estimate. Combining this with (\ref{eq:ricci-integral}),
we show that, for small $\epsilon>0$,
$$\int_0^{s_0}{\rm Ric}(\gamma',\gamma')ds\leqslant
C+\frac{n-1}{r_0}+Cr_0+C\leqslant C
+\epsilon s_0.$$

Then one can deduce a (convex) quadratic lower bound
for $f(x)$ by
\begin{equation*}
\begin{split}
\gamma'(f)(x)-\gamma'(f)(O)
&= \int_0^{s_0}\nabla^2(f)(\gamma', \gamma')ds \\
&= \int_0^{s_0}(\rho-{\rm Ric(\gamma', \gamma')})ds \\
&\geqslant  (\rho-\epsilon)s_0-C,
\end{split}
\end{equation*}
which proves that $f(x)\geqslant\frac{\rho}{2}r(x)^2$ for $x$ with
large $r(x)=s_0$.

\vspace{0.2in}

\item {\bf Case 2:} If $R(x)\geqslant Ds_0$, then as $R+
|\nabla f|^2-2\rho f=0$, we have
$$f=\frac{1}{2\rho}(R+|\nabla f|^2)\geqslant \frac{Ds_0}
{2\rho}.$$

\end{itemize}

In conclusion, we have, for $x$ with large $r(x)$,
$$f(x)\geqslant min\{\frac{\rho}{2}r(x)^2, \frac{Dr(x)}{2\rho}\}.$$

The proof of the lemma is thus finished.

\end{proof}

Now we can finish the proof of Theorem \ref{lem:C2007-(1.4)-B}.

\begin{proof}[Proof of Theorem \ref{lem:C2007-(1.4)-B}]
The first author proved the identity
\eqref{eq:cao2007(jga)-(1.11)} when $M$ is a closed shrinking
gradient soliton in \cite[P427 $\sim$ P429]{CXD2007}. The key
point of the proof is using integration by parts (IBP). If we can
show that all of the IBPs in \cite[P427 $\sim$
P429]{CXD2007} are valid for the soliton $(M,g)$  in Theorem \ref{lem:C2007-(1.4)-B}, then we
can repeat the proof.

The integrations by parts we need to check are listed as follows:
\begin{align}\label{eq:IBP-(01)}
   \int_{M}R_{lkjp}f_{p}\nabla_{k}R_{jl}e^{-f}d\mu
      &=-\int_{M}R_{lkjp}f_{pk}R_{jl}e^{-f}d\mu\ ,\\
   \int_{M}\nabla_{k}R_{jl}\nabla_{l}R_{jk}e^{-f}d\mu
      &=\int_{M}R_{jk}(\nabla_{j}R_{ik}f_{i}-
        \nabla_{i}\nabla_{j}R_{ik})e^{-f}d\mu\ ,\\
   \int_{M}R_{jk}\nabla_{j}R_{ik}f_{i}e^{-f}d\mu
      &=-\int_{M}R_{jk}R_{ik}f_{ij}e^{-f}d\mu\ ,\\
   \int_{M}\nabla_{j}\nabla_{i}R_{ik}R_{jk}e^{-f}d\mu
      &=-\int_{M}\nabla_{i}R_{ik}\nabla_{j}(R_{jk}e^{-f})d\mu\ ,
\end{align}
where $f_{p}=\nabla_{p}f$, $f_{pk}=\nabla_{p}\nabla_{k}f$, etc.
We will just check \eqref{eq:IBP-(01)}. The others are similar.

Before we check the above identities, we need the derivative
estimate of curvatures. We now view our shrinking soliton as a
solution to the Ricci flow on $t \in [-1,0]$ with $g(0)=g$ and
\begin{equation}\label{eq:NW2007-(1.1)-02}
   |R_{ijkl}|(y,t)\leq{}C\exp\left(\frac{3}{2}\,ar(x)\right)\ ,
   \quad\forall\,(y,t)\in{}
   B_{g(-1)}\left(x,\frac{r(x)}{2}\right)\times[-1,0],
\end{equation}
here $r(x)$ is the distance function to some fixed point $O\in{}M$
with respect to the metric $g(0)$, $C$ is some constant depends only
on $n$. Applying the local derivative
estimate of W.-X. Shi \cite{shi89} (cf. \cite{H1995}), we have
\begin{align*}
   |\nabla\Rm|(x,0)&\leq{}C\exp\left(\frac{9}{4}\,ar(x)\right),\\
   |\nabla\nabla\Rm|(x,0)&\leq{}
          C\exp\left(\frac{27}{8}\,ar(x)\right)\ ,
\end{align*}
etc., here all the constants $C$ depend only on $n$. \\

We need to choose $D$ to be large enough in Lemma
\ref{lem:NW2007-(2.3)}. Clearly the lower bound for $f(x)$ would
then be $\frac{D}{2\rho}r(x)$ for large $r(x)$.

Now we turn to our integral identities. Fix a point $O\in{}M$, and
let $B_{r}=B_{r}(O)$ be the ball centered at $O$ with radius $r>0$
in $M$. Let $X_{k}=R_{lkjp}f_{p}R_{jl}e^{-f}$, then
\begin{equation*}
   R_{lkjp}f_{p}\nabla_{k}R_{jl}e^{-f}=
   \nabla_{k}X_{k}-R_{lkjp}f_{pk}R_{jl}e^{-f}\ ,
\end{equation*}
where we use the identity \eqref{eq:cao2007-(2.2)}. Hence we have
\begin{equation}\label{eq:IBP-(01a)}
   \int_{B_{r}}R_{lkjp}f_{p}\nabla_{k}R_{jl}e^{-f}d\mu=
   \int_{\partial{}B_{r}}\inner{X}{\nu}dA-
   \int_{B_{r}}R_{lkjp}f_{pk}R_{jl}e^{-f}d\mu\ ,
\end{equation}
where $\nu$ is the unit normal vector field on $\partial{}B_{r}$
and $dA$ is the induced measure on $\partial{}B_{r}$. We claim
that $\int_{\partial{}B_{r}}\inner{X}{\nu}dA\to{}0$ as
$r\to\infty$ and the other integrals in \eqref{eq:IBP-(01a)} are
finite as $r\to\infty$. In fact since ${\rm Ric}\geq -Kg$, by
Bishop-Gromov Comparison Theorem (cf. \cite{SY1994}), which
gives linear exponential growth control of the volume form, as
$r\to\infty$,
\begin{align*}
   \left|\int_{\partial{}B_{r}}\inner{X}{\nu}dA\right|
     &\leq{}C\int_{\partial{}B_{r}}|\Rm|\cdot|\nabla
     {}f|\cdot|\Ric|e^{-f}dA\\
     &\leq{}C\int_{\partial{}B_{1}}\exp\left[-\frac{D}
     {2\rho}\,r+ (2a+C)r\right]dA_1\to{}0,
\end{align*}
where $D$ is chosen to be large enough. For the other
integrals, still using Bishop-Gromov Comparison Theorem,
as for any $r$, we have
\begin{align*}
   \left|\int_{B_{r}\setminus{}B_{1}}
   R_{lkjp}f_{p}\nabla_{k}R_{jl}e^{-f}d\mu\right|
     \leq&\,\int_{B_{r}\setminus{}B_{1}}|\Rm|\cdot|\nabla{}f|\cdot|\nabla\Ric|
             e^{-f}d\mu\\
     \leq&\,C\int_{1}^{r}\exp\left[-\frac{D}{2\rho}\,t+
             \left(\frac{13}{4}\,a+C\right)t\right]dt<\infty,
\end{align*}
and
\begin{align*}
   \left|\int_{B_{r}\setminus{}B_{1}}R_{lkjp}f_{pk}R_{jl}e^{-f}d\mu\right|
    \leq&\,\int_{B_{r}\setminus{}B_{1}}|\Rm|\cdot(|\Ric|+n\rho)\cdot|\Ric|
           e^{-f}d\mu\\
    \leq&\,C\int_{1}^{r}\exp\left[-\frac{D}{2\rho}\,t+
           (3a+C)t\right]dt<\infty.
\end{align*}
This leads to
$$\left|\int_{B_{r}}R_{lkjp}f_{p}\nabla_{k}R_{jl}e^{-f}d\mu\right|<\infty
\;\; \mbox{and} \;\;
\left|\int_{B_{r}}R_{lkjp}f_{pk}R_{jl}e^{-f}d\mu\right|
   <\infty\ .$$
So we verify that \eqref{eq:IBP-(01)} is valid.
\end{proof}

Now we will finish the proof of Theorem \ref{main theorem 02}
using the above integral identity.

\begin{proof}[Proof of Theorem \ref{main theorem 02}]
Since $f(x)\leq Cr(x)^2$ for $x$ with $r(x)$ large enough, and $R+|\nabla f|^2
=2\rho f$, so we have $$R\leq 2\rho f\leq Cr^2.$$
With
$(M,g)$ is locally conformally flat and the Ricci curvature is bounded from
below,
it implies that $|\Ric|\leq Cr^2$. So the condition of Theorem 1 is satisfied.

If $(M,g)$ is
not flat, following
the proof of Theorem \ref{main theorem 01}, we have
\begin{equation*}
      \frac{1}{n}\int_{M}|\nabla{}R|^{2}e^{-f}d\mu \leq
      \frac{1}{2(n-1)}\int_{M}|\nabla{}R|^{2}e^{-f}d\mu \ .
\end{equation*}
Hence we have $\nabla{}R=0$ and $\nabla\Ric=0$. Since
$(M,g)$ is locally conformally flat, we conclude that $\nabla
\Rm=0$, i.e., $M$ is a locally symmetric space.

Using $\nabla\Ric=0$ and $\nabla_{i}(R_{ik}e^{-f})=0$, we have
\begin{equation*}
  0=(\nabla_{i}R_{ik})e^{-f}-R_{ik}f_{i}e^{-f}=
    -R_{ik}f_{i}e^{-f}\ ,
\end{equation*}
which implies $R_{ik}f_{i}=0$ (this identity also follows from the
identity $\nabla_{i}R=2R_{ij}f_{j}$ and the fact that $R$ is
constant). Moreover, we have
\begin{equation*}
   0=\nabla_{j}(R_{ik}f_{i})=R_{ik}f_{ij}\ ,
   \quad\forall\,j,k=1,\ldots,n\ ,
\end{equation*}
here we use the fact that $\nabla\Ric=0$. For any $x \in M$, we
diagonalize the Ricci curvature tensor $\Ric(x)$ in an orthonormal
frame (hence the Hessian $\nabla \nabla f$ is also diagonalized
because of the soliton equation (\ref{eq:grs})),
\begin{equation*}
   R_{ij}=\begin{pmatrix}
             \lambda_{1} & & & \\
                 & \lambda_{2} & & \\
                 & & \ddots & \\
                 & & & \lambda_{n}
          \end{pmatrix}\ .
\end{equation*}
So we arrive at $\lambda_{i}\cdot{}f_{ii}=0$ for each
$i\in\{1,\ldots,n\}$. This implies that, for each $i$, either
\begin{equation*}
   \lambda_{i}=0,
\end{equation*}
or
\begin{equation*}
   f_{ii}=0\ \ \text{but}\ \ \lambda_{i}\ne{}0.
\end{equation*}
 When $f_{ii}=0$, we get $\lambda_{i}=\rho$ by the soliton equation
 \eqref{eq:grs}.
Therefore without loss of generality, we may assume
\begin{equation*}
   \lambda_{i}=\begin{cases}
                  0\ , & 1\leq{}i\leq{}m\ ,\\
                  \rho\ , & m+1\leq{}i\leq{}n\ ,
               \end{cases}
\end{equation*}
where $1\leq{}m\leq{}n$. Now since $(M,g)$ is locally conformally
flat, by equation \eqref{rmc}, we have
\begin{equation*}
   R_{ijij}=\frac{1}{(n-1)(n-2)}[(n-1)(\lambda_{i}+\lambda_{j})-
   (n-m)\rho]\ .
\end{equation*}
Using an identity of Berger (also see Lemma 4.1 in \cite{CXD2007})
and the fact that $\nabla\Ric=0$, we have
\begin{align*}
   0&=\sum_{i<j}R_{ijij}(\lambda_{i}-\lambda_{j})^{2}
       =\left(\sum_{i<j\leq{}m}+\sum_{i\leq{}m<j}+
        \sum_{m+1\leq{}i<j}\right)
        R_{ijij}(\lambda_{i}-\lambda_{j})^{2}\\
      &=\frac{1}{(n-1)(n-2)}\sum_{i\leq{}m<j}
        [(n-1)(\lambda_{i}+\lambda_{j})-(n-m)\rho]
        (\lambda_{i}-\lambda_{j})^{2}\\
      &=\frac{1}{(n-1)(n-2)}\,m(m-1)(n-m)\rho^{3}\ .
\end{align*}
Thus $m=0,1$ or $n$, then we get
\begin{itemize}
   \item $\lambda_{1}=\lambda_{2}=\cdots=\lambda_{n}=\rho$, or
   \item $\lambda_{1}=0$ and
         $\lambda_{2}=\lambda_{3}=\cdots=\lambda_{n}=\rho$, or
   \item $\lambda_{1}=\lambda_{2}=\cdots=\lambda_{n}=0$.
\end{itemize}
Since $R=\lambda_{1}+\cdots+\lambda_{n}$ is constant on $M$, the
above result is global (i.e. if the eigenvalues of the Ricci
curvature tensor $\lambda_{1},\ldots,\lambda_{n}$ belong to one of
the above three cases at one point, then they belong to the same
case at any other points).

For the first case, $R=n\rho>0$, and we have
\begin{equation*}
   R_{ij}=\frac{R}{n}\,g_{ij}=\rho g_{ij} >0\ ,
\end{equation*}
so $(M,g)$ is compact, but we assume $(M,g)$ is complete
noncompact, this is a contradiction.

For the second case, $R=(n-1)\rho>0$, and we have
\begin{equation*}
   R_{1i}=R_{i1}=0\ ,\quad{}1\leq{}i\leq{}n\ ,
\end{equation*}
and
\begin{equation*}
   R_{jk}=\frac{R}{n-1}\,g_{jk}\ ,\quad{}2\leq{}j,k\leq{}n\ .
\end{equation*}

For the third case, we have $R_{ij}=0$ and $R=0$, so
$R_{ijkl}=0$, i.e., $(M,g)$ is flat. This also yields a contradiction.

Hence we can conclude that the sectional
curvature
\begin{equation*}
   K(e_{i},e_{j})=\frac{R_{ijij}}{g_{ii}g_{jj}-g_{ij}^{2}}
\end{equation*}
is nonnegative, so $(M,g)$ is a simply connected locally symmetric
space of nonnegative sectional curvature, then we prove the
statements of this theorem by using Theorem 10.1.1 in
\cite{W1984}.
\end{proof}


\bibliographystyle{alpha}
\bibliography{solitonbio}

\end{document}